\theoremstyle{plain}
\newtheorem{theorem}{Theorem}[section]
\newtheorem{corollary}[theorem]{Corollary}
\newtheorem{lemma}[theorem]{Lemma}
\theoremstyle{definition}
\newtheorem{definition}{Definition}[section]
\theoremstyle{remark}
\newtheorem{remark}{Remark}[section]
\theoremstyle{example}
\newtheorem{example}{Example}[section]
\numberwithin{equation}{section}
\begin{document}

\title[Characterization Theorems]
{Characterization Theorems for Generalized Functionals of Discrete-Time Normal Martingale}

\author{Caishi Wang}
\address[Caishi Wang]
          {School of Mathematics and Statistics,
          Northwest Normal University,
          Lanzhou, Gansu 730070,
          People's Republic of China }
\email{wangcs@nwnu.edu.cn}

\author{Jinshu Chen}
\address[Jinshu Chen]
         {School of Mathematics and Statistics,
         Northwest Normal University,
          Lanzhou, Gansu 730070,
          People's Republic of China}

\subjclass[2010]{Primary: 60H40; Secondary: 46F25}
\keywords{Discrete-time normal martingale, Generalized functional, Fock transform, Characterization, Application}

\begin{abstract}
In this paper, we aim at characterizing generalized functionals of discrete-time normal martingales.
Let $M=(M_n)_{n\in \mathbb{N}}$ be a discrete-time normal martingale that has the chaotic representation property.
We first construct testing and generalized functionals of $M$ with an appropriate orthonormal basis for $M$'s square integrable functionals.
Then we introduce a transform, called the Fock transform, for these functionals and characterize them via the transform.
Several characterization theorems are established. Finally we give some applications of these characterization theorems.

Our results show that generalized functionals of discrete-time normal martingales can be characterized only by growth condition,
which contrasts sharply with the case of some continuous-time processes (e.g., Brownian motion), where both growth condition and analyticity
condition are needed to characterize generalized functionals of those continuous-time processes.
\end{abstract}

\maketitle

\section{Introduction}\label{sec-1}

Hida's white noise analysis is essentially an infinite dimensional calculus on generalized functionals
of Brownian motion \cite{hida,huang,kuo,obata}. In 1988, Y. Ito \cite{ito} introduced his theory of generalized Poisson functionals, which can be viewed as
an infinite dimensional calculus on generalized functionals of Poisson martingale.
It is known that both Brownian motion and Poisson martingale are continuous-time normal martingales.
There are theories of white noise analysis for some other continuous-time processes (see, e.g., \cite{albe, barhoumi, di, hu,lee}).

Discrete-time normal martingales \cite{privault} also play an important role in many theoretical and applied fields \cite{mot,rud}.
It would then be interesting to develop an infinite dimensional calculus on generalized functionals
of discrete-time normal martingales. In \cite{wang-z}, the authors defined the Wick product for generalized functionals of Bernoulli noise and
analyzed its properties. In fact, generalized functionals of Bernoulli noise can be viewed as generalized functionals of a random walk.

In this paper, we consider a class of discrete-time normal martingales, namely the ones that have the chaotic representation property,
which include random walks, especially the classical random walk. Our main work is as follows.
Let $M=(M_n)_{n\in \mathbb{N}}$ be a discrete-time normal martingale that has the chaotic representation property.
We first construct testing and generalized functionals of $M$ with an appropriate orthonormal basis for $M$'s square integrable functionals.
Then we introduce a transform, called the Fock transform, for these functionals and characterize them via the transform.
Several characterization theorems are established. Finally we give some applications of these characterization theorems.

Our results show that generalized functionals of discrete-time normal martingales
can be characterized only by growth condition, which contrasts sharply with the case of some continuous-time processes (e.g., Brownian motion),
where both growth condition and analyticity condition are needed to characterize
generalized functionals of those continuous-time processes (see, e.g., \cite{hida,hol,huang,kuo,obata,pott}).

\section{Discrete-time normal martingale}\label{sec-2}

Throughout this paper, $\mathbb{N}$ designates the set of all nonnegative integers and $\Gamma$ the finitie power set of $\mathbb{N}$, namely
\begin{equation}\label{eq-2-1}
\Gamma = \{\,\sigma \mid \text{$\sigma \subset \mathbb{N}$ and $\#(\sigma) < \infty$} \,\},
\end{equation}
where $\#(\sigma)$ means the cardinality of $\sigma$ as a set. It is not hard to check that $\Gamma$ is countable as an infinite set.
Additionally, we assume that $(\Omega, \mathcal{F}, P)$ is a given probability space. We denote by $\mathcal{L}^{2}(\Omega, \mathcal{F}, P)$ the usual Hilbert space of
square integrable complex-valued functions on $(\Omega, \mathcal{F}, P)$
and use $\langle\cdot,\cdot\rangle$ and $\|\cdot\|$ to mean its inner product and norm, respectively.
By convention, $\langle\cdot,\cdot\rangle$ is conjugate-linear in its first argument and linear in its second argument.

\begin{definition}\cite{privault}\label{def-2-1}
A (real-valued) stochastic process $M=(M_n)_{n\in \mathbb{N}}$ on
$(\Omega, \mathcal{F}, P)$ is called a discrete-time normal martingale if it is square integrable and satisfies:
\begin{enumerate}[(i)]
  \item $\mathbb{E}[M_0 | \mathcal{F}_{-1}] = 0$ and $\mathbb{E}[M_n | \mathcal{F}_{n-1}] = M_{n-1}$
 for $n\geq 1$;
  \item $\mathbb{E}[M_0^2 | \mathcal{F}_{-1}] = 1$ and $\mathbb{E}[M_n^2 | \mathcal{F}_{n-1}] = M_{n-1}^2 +1$
for $n\geq 1$,
\end{enumerate}
where $\mathcal{F}_{-1}=\{\emptyset, \Omega\}$ and
$\mathcal{F}_n = \sigma(M_k; 0\leq k \leq n)$ for $n \in \mathbb{N}$.
\end{definition}

Now let $M=(M_n)_{n\in \mathbb{N}}$ be a discrete-time normal martingale on $(\Omega, \mathcal{F}, P)$.
We give some necessary notions concerning $M$. First we construct from $M$ a process $Z=(Z_n)_{n\in \mathbb{N}}$ as
\begin{equation}\label{eq-2-2}
Z_0=M_0,\quad Z_n = M_n-M_{n-1},\quad  n\geq 1.
\end{equation}
It can be verified that $Z$ admits the following properties:
\begin{equation}\label{eq-2-3}
    \mathbb{E}[Z_n | \mathcal{F}_{n-1}] =0\quad \text{and}\quad  \mathbb{E}[Z_n^2 | \mathcal{F}_{n-1}] =1,\quad  n\in \mathbb{N}.
\end{equation}
Thus, it can be viewed as a discrete-time noise (see \cite{privault}).

\begin{definition}\label{def-2-2}
The process $Z$ defined by (\ref{eq-2-2}) is called the discrete-time normal noise associated
with $M$.
\end{definition}

The next lemma shows
that, from the discrete-time normal noise $Z$, one can get an orthonormal system in $\mathcal{L}^2(\Omega, \mathcal{F}, P)$,
which is indexed by $\sigma \in \Gamma$.

\begin{lemma}\cite{emery,wang-lc}\label{lem-2-1}
Let $Z=(Z_n)_{n\in \mathbb{N}}$ be the discrete-time normal noise associated with $M$.
Define $Z_{\emptyset}=1$, where $\emptyset$ denotes the empty set, and
\begin{equation}\label{eq-2-4}
    Z_{\sigma} = \prod_{i\in \sigma}Z_i,\quad \text{$\sigma \in \Gamma$, $\sigma \neq \emptyset$}.
\end{equation}
Then $\{Z_{\sigma}\mid \sigma \in \Gamma\}$ forms a countable orthonormal system in
$\mathcal{L}^2(\Omega, \mathcal{F}, P)$.
\end{lemma}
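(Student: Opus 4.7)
The plan is to verify that $\|Z_\sigma\|=1$ for every $\sigma\in\Gamma$ and $\langle Z_\sigma,Z_\tau\rangle=0$ for $\sigma\ne\tau$, both of which reduce (since the $Z_i$ are real-valued) to evaluating $\mathbb{E}[Z_\sigma Z_\tau]$. Countability of the index set is automatic: $\Gamma=\bigcup_{k\ge0}\{\sigma\subset\mathbb{N}:\#(\sigma)=k\}$ is a countable union of countable sets.

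To evaluate $\mathbb{E}[Z_\sigma Z_\tau]$, I would induct on $N:=\max(\sigma\cup\tau)$ (with the base case $\sigma=\tau=\emptyset$ trivial since $Z_\emptyset=1$). For $i\le N-1$, $Z_i=M_i-M_{i-1}$ (or $M_0$ when $i=0$) is $\mathcal{F}_{N-1}$-measurable, so the partial product $Y:=\prod_{i\in(\sigma\cup\tau)\setminus\{N\}} Z_i^{m_i}$ (where $m_i\in\{1,2\}$ records the multiplicity coming from $\sigma$ and $\tau$) is $\mathcal{F}_{N-1}$-measurable. Conditioning on $\mathcal{F}_{N-1}$ and pulling $Y$ outside, I would split into two cases on whether $N$ lies in the symmetric difference or in the intersection.

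In the first case, $N\in\sigma\bigtriangleup\tau$, the factor at index $N$ appearing in $Z_\sigma Z_\tau$ is $Z_N$ to the first power, and $\mathbb{E}[Z_N\mid\mathcal{F}_{N-1}]=0$ by (\ref{eq-2-3}) kills the whole expectation; this already covers every $\sigma\ne\tau$ by choosing $N=\max(\sigma\bigtriangleup\tau)$ (adjust the induction so that indices in $\sigma\cap\tau$ above $\max(\sigma\bigtriangleup\tau)$ are peeled off first using the second case). In the second case, $N\in\sigma\cap\tau$, the factor is $Z_N^2$, and $\mathbb{E}[Z_N^2\mid\mathcal{F}_{N-1}]=1$ reduces the expectation to $\mathbb{E}[Z_{\sigma\setminus\{N\}} Z_{\tau\setminus\{N\}}]$, which is the inductive hypothesis with a strictly smaller maximum. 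Iterating these two moves either produces a factor $0$ (whenever $\sigma\ne\tau$) or collapses everything to $\mathbb{E}[Z_\emptyset^2]=1$ (when $\sigma=\tau$).

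The one technical obstacle I expect is that a priori one only knows $\mathbb{E}[Z_i^2]=1$, not that the products $Z_\sigma$ even lie in $\mathcal{L}^2$; so the conditional-expectation manipulations above need to be justified as they are performed. I would handle this by doing the $\sigma=\tau$ case first with nonnegative integrands, applying the tower property to $\prod_{i\in\sigma}Z_i^2\ge 0$ and using $\mathbb{E}[Z_N^2\mid\mathcal{F}_{N-1}]=1$ together with the $\mathcal{F}_{N-1}$-measurability of the remaining factors to get $\mathbb{E}[Z_\sigma^2]=1$ by induction; this simultaneously establishes $Z_\sigma\in\mathcal{L}^2$. The orthogonality case $\sigma\ne\tau$ then follows by Cauchy--Schwarz (giving integrability of $Z_\sigma Z_\tau$) and the same conditioning argument, with the $\mathbb{E}[Z_N\mid\mathcal{F}_{N-1}]=0$ step producing the vanishing.
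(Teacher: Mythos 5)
Your proposal is correct. Note that the paper itself offers no proof to compare against: Lemma~\ref{lem-2-1} is quoted from \cite{emery,wang-lc}, so the only benchmark is the standard argument, and yours is exactly that argument --- peel off the largest index $N$ of $\sigma\cup\tau$, use the $\mathcal{F}_{N-1}$-measurability of the remaining factors, and apply $\mathbb{E}[Z_N\mid\mathcal{F}_{N-1}]=0$ when $N\in\sigma\bigtriangleup\tau$ and $\mathbb{E}[Z_N^2\mid\mathcal{F}_{N-1}]=1$ when $N\in\sigma\cap\tau$, as in (\ref{eq-2-3}). The one genuinely delicate point is the one you flagged: the pull-out property $\mathbb{E}[YX\mid\mathcal{G}]=Y\,\mathbb{E}[X\mid\mathcal{G}]$ needs justification when $Y$ is not bounded, and your ordering of the argument resolves it cleanly --- first the diagonal case with the nonnegative integrand $\prod_{i\in\sigma}Z_i^2$, where the pull-out property holds for nonnegative $\mathcal{F}_{N-1}$-measurable factors without any integrability hypothesis, yielding $\mathbb{E}[Z_\sigma^2]=1$ and hence $Z_\sigma\in\mathcal{L}^2(\Omega,\mathcal{F},P)$ by induction; then the off-diagonal case, where Cauchy--Schwarz gives integrability of $Z_\sigma Z_\tau$ (and of the partially peeled products $Z_{\sigma'}Z_{\tau'}$, which is needed at each stage of the iteration, not just the first), so the conditioning steps are legitimate. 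With that, iterating the two moves terminates either in a factor $0$ (whenever $\sigma\neq\tau$) or in $\mathbb{E}[Z_\emptyset^2]=1$, which is the full statement.
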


Let $\mathcal{F}_{\infty}=\sigma(M_n; n\in \mathbb{N})$, the $\sigma$-field over $\Omega$ generated by $M$.
In the literature, $\mathcal{F}_{\infty}$-measurable functions on $\Omega$ are also known as functionals of $M$.
Thus elements of $\mathcal{L}^2(\Omega, \mathcal{F}_{\infty}, P)$ can be called square integrable functionals of $M$.

\begin{definition}\label{def-2-3}
The discrete-time normal martingale $M$ is said to have the chaotic representation property if
the system $\{Z_{\sigma}\mid \sigma \in \Gamma\}$ defined by (\ref{eq-2-4}) is total in $\mathcal{L}^2(\Omega, \mathcal{F}_{\infty}, P)$.
\end{definition}

So, if the discrete-time normal martingale $M$ has the chaotic representation property, then the system $\{Z_{\sigma}\mid \sigma \in \Gamma\}$
defined by (\ref{eq-2-4}) is actually an orthonormal basis for $\mathcal{L}^2(\Omega, \mathcal{F}_{\infty}, P)$,
which is a closed subspace of $\mathcal{L}^2(\Omega, \mathcal{F}, P)$ as is known.

\begin{remark}
\'{E}mery \cite{emery} called a $\mathbb{Z}$-indexed process $X=(X_n)_{n \in \mathbb{Z}}$ 
a novation, provided it satisfies (\ref{eq-2-3}), and introduced the notion of the chaotic representation property for such a process.
\end{remark}

\section{Generalized functionals of discrete-time nornal martingale}\label{sec-3}

In the present section, we show how to construct generalized functionals of a discrete-time normal martingale.

Let $M=(M_n)_{n\in \mathbb{N}}$ be a discrete-time normal martingale on $(\Omega, \mathcal{F}, P)$
that has the chaotic representation property.
We denote by $Z=(Z_n)_{n\in \mathbb{N}}$ the discrete-time normal noise associated
with $M$ (see (\ref{eq-2-2}) for its definition) and use the notation $Z_{\sigma}$ as defined in (\ref{eq-2-4}).

For brevity, we use $\mathcal{L}^{2}(M)$ to mean the space of square integrable functionals of $M$, namely
\begin{equation}\label{eq-3-1}
  \mathcal{L}^{2}(M) = \mathcal{L}^{2}(\Omega, \mathcal{F}_{\infty}, P),
\end{equation}
which shares the same inner product and norm with $\mathcal{L}^{2}(\Omega, \mathcal{F}, P)$, namely $\langle\cdot,\cdot\rangle$ and $\|\cdot\|$.

\begin{lemma}\cite{wang-z}\label{lem-wz}
Let $\sigma\mapsto\lambda_{\sigma}$ be the $\mathbb{N}$-valued function on $\Gamma$ given by
\begin{equation}\label{eq-3-2}
\lambda_{\sigma}=
\left\{
  \begin{array}{ll}
    \prod_{k\in\sigma}(k+1), & \hbox{$\sigma\neq \emptyset$, $\sigma\in\Gamma$;}\\
    1, & \hbox{$\sigma=\emptyset$, $\sigma\in\Gamma$.}
  \end{array}
\right.
\end{equation}
Then, for $p>1$, the positive term series $\sum_{\sigma\in\Gamma}\lambda^{-p}_{\sigma}$ converges and moreover
\begin{equation}\label{eq-3-3}
\sum_{\sigma\in\Gamma}\lambda^{-p}_{\sigma}\leq \exp\bigg[\sum_{k=1}^{\infty}k^{-p}\bigg]<\infty.
\end{equation}
\end{lemma}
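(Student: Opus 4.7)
The plan is to identify the sum as an infinite product expansion indexed by finite subsets, and then bound the product by an exponential.

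First I would write $\lambda_\sigma^{-p} = \prod_{k\in\sigma}(k+1)^{-p}$ and set $a_k = (k+1)^{-p}$ for $k\in\mathbb{N}$. For each $N\in\mathbb{N}$, let $\Gamma_N = \{\sigma\in\Gamma : \sigma\subset\{0,1,\ldots,N\}\}$. The finite subsets of $\{0,1,\ldots,N\}$ are in bijection with $\{0,1\}^{N+1}$, and expanding the finite product gives the combinatorial identity
\begin{equation*}
\sum_{\sigma\in\Gamma_N}\lambda_\sigma^{-p} \;=\; \sum_{\sigma\subset\{0,\ldots,N\}}\prod_{k\in\sigma}(k+1)^{-p} \;=\; \prod_{k=0}^{N}\bigl(1+(k+1)^{-p}\bigr).
\end{equation*}
Since all terms are nonnegative and $\Gamma=\bigcup_N\Gamma_N$ is an increasing exhaustion, any finite sub-sum of $\sum_{\sigma\in\Gamma}\lambda_\sigma^{-p}$ is dominated by the right-hand side for some $N$; so passing to the supremum gives $\sum_{\sigma\in\Gamma}\lambda_\sigma^{-p} = \prod_{k=0}^{\infty}(1+(k+1)^{-p})$ as an identity in $[0,\infty]$.

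Next I would use the elementary inequality $1+x\leq e^x$ for $x\geq 0$ termwise to obtain
\begin{equation*}
\prod_{k=0}^{\infty}\bigl(1+(k+1)^{-p}\bigr) \;\leq\; \prod_{k=0}^{\infty}\exp\bigl[(k+1)^{-p}\bigr] \;=\; \exp\bigg[\sum_{k=0}^{\infty}(k+1)^{-p}\bigg] \;=\; \exp\bigg[\sum_{k=1}^{\infty}k^{-p}\bigg].
\end{equation*}
For $p>1$, the classical $p$-series $\sum_{k=1}^{\infty}k^{-p}$ is finite, so the right-hand side is finite, which gives both the convergence of $\sum_{\sigma\in\Gamma}\lambda_\sigma^{-p}$ and the stated bound.

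There is essentially no hard step here; the only point needing mild care is the passage from the finite identity on $\Gamma_N$ to the identity on all of $\Gamma$, which is justified purely by nonnegativity and the fact that every $\sigma\in\Gamma$ is finite and hence lies in some $\Gamma_N$. No rearrangement subtleties arise because all summands are positive.
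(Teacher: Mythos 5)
Your proof is correct: the identity $\sum_{\sigma\subset\{0,\ldots,N\}}\prod_{k\in\sigma}(k+1)^{-p}=\prod_{k=0}^{N}\bigl(1+(k+1)^{-p}\bigr)$, the monotone passage to $\Gamma=\bigcup_N\Gamma_N$ justified by nonnegativity, and the termwise bound $1+x\leq e^x$ together yield exactly the stated estimate. The paper itself offers no proof of this lemma --- it is quoted from \cite{wang-z} --- but the precise form of the bound $\exp\bigl[\sum_{k=1}^{\infty}k^{-p}\bigr]$ shows that the intended argument is exactly yours (subset--product expansion followed by $1+x\leq e^x$), so your proposal matches the standard proof in both substance and detail.
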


Using the $\mathbb{N}$-valued function defined by (\ref{eq-3-2}), we can construct a chain of Hilbert spaces of functionals of $M$ as follows.
For $p\geq 0$, we define a norm $\|\cdot\|_p$ on $\mathcal{L}^{2}(M)$ through
\begin{equation}\label{eq-3-4}
  \|\xi\|_{p}^2=\sum_{\sigma\in \Gamma}\lambda_{\sigma}^{2p}|\langle Z_{\sigma}, \xi\rangle|^{2},\quad \xi \in \mathcal{L}^{2}(M)
\end{equation}
and put
\begin{equation}\label{eq-3-5}
  \mathcal{S}_p(M) = \big\{\, \xi \in \mathcal{L}^{2}(M) \mid \|\xi\|_{p}< \infty\,\big\}.
\end{equation}
It is not hard to check that $\|\cdot\|_{p}$ is a Hilbert norm and $\mathcal{S}_p(M)$ becomes a Hilbert space
with $\|\cdot\|_{p}$. Moreover, the inner product corresponding to $\|\cdot\|_{p}$ is given by
\begin{equation}\label{eq-3-6}
  \langle \xi,\eta\rangle_p
  = \sum_{\sigma\in \Gamma}\lambda_{\sigma}^{2p}\overline{\langle Z_{\sigma},\xi\rangle} \langle Z_{\sigma}, \eta\rangle,\quad
  \xi,\, \eta \in \mathcal{S}_p(M).
\end{equation}
Here $\overline{\langle Z_{\sigma},\xi\rangle}$ means the complex conjugate of $\langle Z_{\sigma},\xi\rangle$.

\begin{lemma}\label{lem-3-2}
For $p\geq 0$, one has $\{Z_{\sigma}\mid \sigma\in\Gamma\} \subset \mathcal{S}_p(M)$ and moreover the system
$\{\lambda^{-p}_{\sigma}Z_{\sigma}\mid \sigma\in\Gamma\}$ forms an orthonormal basis for $\mathcal{S}_p(M)$.
\end{lemma}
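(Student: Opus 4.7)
My plan is to reduce everything to the $\mathcal{L}^{2}(M)$-orthonormality of $\{Z_{\sigma}\}_{\sigma\in\Gamma}$ from Lemma \ref{lem-2-1} combined with the chaotic representation property from Definition \ref{def-2-3}. No estimate on the weights $\lambda_{\sigma}$ is actually needed here (Lemma \ref{lem-wz} is not used in this proof); the argument is purely a formal weight-shifting computation.

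First I would verify the inclusion $\{Z_{\sigma}\mid\sigma\in\Gamma\}\subset\mathcal{S}_{p}(M)$ by direct substitution into (\ref{eq-3-4}). Since $\langle Z_{\tau},Z_{\sigma}\rangle=\delta_{\tau\sigma}$ by Lemma \ref{lem-2-1}, only the term $\tau=\sigma$ survives in the sum, giving $\|Z_{\sigma}\|_{p}^{2}=\lambda_{\sigma}^{2p}<\infty$. In particular $Z_{\sigma}\in\mathcal{S}_{p}(M)$ and $\|\lambda_{\sigma}^{-p}Z_{\sigma}\|_{p}=1$. Next, applying (\ref{eq-3-6}) to $\xi=\lambda_{\sigma}^{-p}Z_{\sigma}$ and $\eta=\lambda_{\tau}^{-p}Z_{\tau}$, the identity $\overline{\langle Z_{\rho},Z_{\sigma}\rangle}\langle Z_{\rho},Z_{\tau}\rangle=\delta_{\rho\sigma}\delta_{\rho\tau}$ collapses the sum to $\lambda_{\sigma}^{-p}\lambda_{\tau}^{-p}\lambda_{\sigma}^{2p}\delta_{\sigma\tau}=\delta_{\sigma\tau}$, which establishes the orthonormality of the proposed system in $(\mathcal{S}_{p}(M),\langle\cdot,\cdot\rangle_{p})$.

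The only slightly nontrivial step is totality in $\mathcal{S}_{p}(M)$, and this is what I would regard as the main (though still mild) obstacle: one must be careful to pass from $\langle\cdot,\cdot\rangle_{p}$-orthogonality back to $\langle\cdot,\cdot\rangle$-orthogonality so as to invoke the chaotic representation property, which lives in $\mathcal{L}^{2}(M)$. To this end I would take $\xi\in\mathcal{S}_{p}(M)$ with $\langle\lambda_{\sigma}^{-p}Z_{\sigma},\xi\rangle_{p}=0$ for every $\sigma\in\Gamma$. Applying (\ref{eq-3-6}) once more, the sum over $\rho$ collapses to a single term and the identity reduces to $\lambda_{\sigma}^{p}\langle Z_{\sigma},\xi\rangle=0$, so $\langle Z_{\sigma},\xi\rangle=0$ for every $\sigma\in\Gamma$. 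Since $\xi\in\mathcal{S}_{p}(M)\subset\mathcal{L}^{2}(M)$ and $M$ has the chaotic representation property, $\{Z_{\sigma}\}_{\sigma\in\Gamma}$ is an orthonormal basis of $\mathcal{L}^{2}(M)$, forcing $\xi=0$. Combined with the orthonormality already verified and the completeness of $\mathcal{S}_{p}(M)$ stated just before the lemma, this shows that $\{\lambda_{\sigma}^{-p}Z_{\sigma}\mid\sigma\in\Gamma\}$ is an orthonormal basis of $\mathcal{S}_{p}(M)$, completing the proof.
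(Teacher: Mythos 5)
Your proposal is correct and follows essentially the same route as the paper's own proof: the direct computation $\|Z_{\sigma}\|_{p}=\lambda_{\sigma}^{p}$, the reduction $\langle\lambda_{\sigma}^{-p}Z_{\sigma},\xi\rangle_{p}=\lambda_{\sigma}^{p}\langle Z_{\sigma},\xi\rangle$, and the appeal to the chaotic representation property to force $\xi=0$ are exactly the paper's steps (its equation (\ref{eq-3-7})). Your explicit verification of $\langle\cdot,\cdot\rangle_{p}$-orthonormality and your remark that Lemma~\ref{lem-wz} is not needed are minor elaborations, not a different argument.
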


\begin{proof}
For $\sigma \in \Gamma$, a direct calculation gives $\|Z_{\sigma}\|_p = \lambda_{\sigma}^{p}<\infty$, which means that
$Z_{\sigma} \in \mathcal{S}_p(M)$. Clearly $\{\lambda^{-p}_{\sigma}Z_{\sigma} \mid \sigma\in\Gamma\}$ is an orthonormal system
in $\mathcal{S}_p(M)$. To complete the proof, we need only to show that it is also total in $\mathcal{S}_p(M)$.
In fact, we have
\begin{equation}\label{eq-3-7}
  \langle \lambda_{\sigma}^{-p}Z_{\sigma}, \xi \rangle_p
   = \sum_{\tau\in \Gamma}\lambda_{\tau}^{2p}\overline{\langle Z_{\tau},\lambda^{-p}_{\sigma}Z_{\sigma}\rangle} \langle Z_{\tau}, \xi\rangle
   = \lambda_{\sigma}^p\langle Z_{\sigma}, \xi\rangle,\quad \xi \in \mathcal{S}_p(M).
\end{equation}
So, if $\xi \in \mathcal{S}_p(M)$ satisfies that $\langle \lambda^{-p}_{\sigma}Z_{\sigma}, \xi \rangle_p=0$ for all $\sigma \in \Gamma$, then
it must satisfy that $\langle Z_{\sigma}, \xi\rangle =0$ for all $\sigma \in \Gamma$, which implies that $\xi=0$ because the system
$\{Z_{\sigma}\mid \sigma \in \Gamma\}$ is an orthonormal basis for$L^2(M)$.
Thus $\{\lambda^{-p}_{\sigma}Z_{\sigma}\mid \sigma\in\Gamma\}$ is total in $\mathcal{S}_p(M)$.
\end{proof}

It is easy to see that $\lambda_{\sigma}\geq 1$ for all $\sigma\in \Gamma$. This implies that $\|\cdot\|_p \leq \|\cdot\|_q$
and $\mathcal{S}_q(M)\subset \mathcal{S}_p(M)$
whenever $0\leq p \leq q$. Thus we actually get a chain of Hilbert spaces of functionals of $M$:
\begin{equation}\label{eq-3-8}
 \cdots \subset \mathcal{S}_{p+1}(M) \subset \mathcal{S}_p(M)\subset  \cdots \subset \mathcal{S}_1(M) \subset \mathcal{S}_0(M)=\mathcal{L}^{2}(M).
\end{equation}
We now put
\begin{equation}\label{eq-3-9}
  \mathcal{S}(M)=\bigcap ^{\infty}_{p=0}\mathcal{S}_{p}(M)
\end{equation}
and endow it with the topology generated by the norm sequence $\{\|\cdot \|_{p}\}_{p\geq 0}$.
Note that, for each $ p\geq 0$, $\mathcal{S}_p(M)$ is just the completion of $\mathcal{S}(M)$ with respect to $\|\cdot\|_{p}$.
Thus $\mathcal{S}(M)$ is a countably-Hilbert space (\cite{becnel, gelfand}).
The next lemma, however, shows that $\mathcal{S}(M)$ even has a much better property.

\begin{lemma}\label{lem-3-3}
The space $\mathcal{S}(M)$ is a nuclear space, namely for any $p\geq 0$,
there exists $q> p$ such that the inclusion mapping $i_{pq}\colon \mathcal{S}_q(M) \rightarrow \mathcal{S}_p(M)$
defined by $i_{pq}(\xi)=\xi$ is a Hilbert-Schmidt operator.
\end{lemma}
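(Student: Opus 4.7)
The plan is to exploit the explicit orthonormal bases given by Lemma \ref{lem-3-2} together with the summability estimate from Lemma \ref{lem-wz}. Given $p\geq 0$, I will pick some $q>p$ (any $q$ with $2(q-p)>1$ will work, so $q=p+1$ is safe) and compute the Hilbert--Schmidt norm of $i_{pq}\colon \mathcal{S}_q(M)\to\mathcal{S}_p(M)$ directly using the basis $\{\lambda_\sigma^{-q}Z_\sigma\mid\sigma\in\Gamma\}$ of $\mathcal{S}_q(M)$.

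First I would recall from Lemma \ref{lem-3-2} that $\|Z_\sigma\|_p=\lambda_\sigma^p$ for every $p\geq 0$ and every $\sigma\in\Gamma$. Hence, regarding $\lambda_\sigma^{-q}Z_\sigma$ as an element of $\mathcal{S}_p(M)$ via $i_{pq}$, we get
\begin{equation*}
\|i_{pq}(\lambda_\sigma^{-q}Z_\sigma)\|_p^2 \;=\; \lambda_\sigma^{-2q}\|Z_\sigma\|_p^2 \;=\; \lambda_\sigma^{-2(q-p)}.
\end{equation*}
The Hilbert--Schmidt norm of $i_{pq}$ is then
\begin{equation*}
\|i_{pq}\|_{\mathrm{HS}}^2 \;=\; \sum_{\sigma\in\Gamma}\|i_{pq}(\lambda_\sigma^{-q}Z_\sigma)\|_p^2 \;=\; \sum_{\sigma\in\Gamma}\lambda_\sigma^{-2(q-p)}.
\end{equation*}

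To conclude, I invoke Lemma \ref{lem-wz} with exponent $2(q-p)$: provided $2(q-p)>1$, the series $\sum_{\sigma\in\Gamma}\lambda_\sigma^{-2(q-p)}$ converges (and is bounded by $\exp[\sum_{k=1}^\infty k^{-2(q-p)}]$). Choosing for instance $q=p+1$ yields $\|i_{pq}\|_{\mathrm{HS}}<\infty$, so $i_{pq}$ is Hilbert--Schmidt, which establishes nuclearity.

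There is no real obstacle here: everything reduces to matching the eigenvalue-type weights $\lambda_\sigma$ of the chain $\{\mathcal{S}_p(M)\}$ against the convergence criterion already proved in Lemma \ref{lem-wz}. The only minor point to be careful about is justifying the Hilbert--Schmidt norm formula via an arbitrary orthonormal basis, which is standard, and verifying that the inclusion is densely defined with $\mathcal{S}(M)$ a common dense subspace (immediate from \eqref{eq-3-9} and the fact that each $Z_\sigma\in\mathcal{S}(M)$).
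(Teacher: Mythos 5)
Your proof is correct and follows essentially the same route as the paper's: choose $q>p$ with $2(q-p)>1$, compute the Hilbert--Schmidt norm of $i_{pq}$ on the orthonormal basis $\{\lambda_\sigma^{-q}Z_\sigma\mid\sigma\in\Gamma\}$ from Lemma~\ref{lem-3-2} to get $\|i_{pq}\|_{\mathrm{HS}}^2=\sum_{\sigma\in\Gamma}\lambda_\sigma^{-2(q-p)}$, and conclude finiteness from Lemma~\ref{lem-wz}. Your explicit choice $q=p+1$ and the remark on basis-independence of the Hilbert--Schmidt norm are harmless additions; no gaps.
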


\begin{proof}
Let $p\geq 0$. Then there exists $q>p$ such that $2(q-p)>1$.
By Lemma~\ref{lem-3-2},
$\{\lambda^{-q}_{\sigma}Z_{\sigma} \mid \sigma\in\Gamma\}$ is an orthonormal basis for $\mathcal{S}_q(M)$.
Thus, it follows from Lemma~\ref{lem-wz} that
\begin{equation}\label{eq-3-10}
\|i_{pq}\|_{HS}^{2}=\sum_{\sigma\in\Gamma}\|i_{pq}(\lambda^{-q}_{\sigma}Z_{\sigma})\|_{p}^{2}
=\sum_{\sigma\in\Gamma}\lambda^{-2(q-p)}_{\sigma}<\infty,
\end{equation}
where $\|\cdot\|_{HS}$ denotes the Hilbert-Schmidt norm of an operator. Therefore the inclusion mapping
$i_{pq}\colon \mathcal{S}_q(M) \rightarrow \mathcal{S}_p(M)$ is a Hilbert-Schmidt operator.
\end{proof}

For $p\geq 0$, we denote by $\mathcal{S}_p^*(M)$ the dual of $\mathcal{S}_p(M)$ and $\|\cdot\|_{-p}$
the norm of $\mathcal{S}_p^*(M)$. Then $\mathcal{S}_p^*(M)\subset \mathcal{S}_q^*(M)$ and
$\|\cdot\|_{-p} \geq \|\cdot\|_{-q}$ whenever $0\leq p \leq q$.
The lemma below is then an immediate consequence of the general theory of countably-Hilbert spaces (see, e.g., \cite{becnel} or \cite{gelfand}).

\begin{lemma}\label{lem-3-4}
Let $\mathcal{S}^*(M)$ the dual of $\mathcal{S}(M)$ and endow it with the strong topology. Then
\begin{equation}\label{eq-3-11}
  \mathcal{S}^*(M)=\bigcup_{p=0}^{\infty}\mathcal{S}_p^*(M)
\end{equation}
and moreover the inductive limit topology on $\mathcal{S}^*(M)$ given by space sequence $\{\mathcal{S}_p^*(M)\}_{p\geq 0}$ coincides with the strong topology.
\end{lemma}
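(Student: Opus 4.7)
The plan is to treat this as the ``immediate consequence'' the text advertises, but to spell out which general fact does the work for each of the two assertions. Two ingredients are needed: the set equality $\mathcal{S}^*(M)=\bigcup_{p\geq 0}\mathcal{S}_p^*(M)$, and the coincidence of the strong topology with the inductive limit topology. The first is essentially soft, resting on the definition of the projective topology plus density; the second is where nuclearity (Lemma~\ref{lem-3-3}) genuinely enters.

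First I would record the density ingredient: since $\{Z_\sigma\mid \sigma\in\Gamma\}\subset \mathcal{S}(M)$ (Lemma~\ref{lem-3-2} holds for every $p$) and $\{\lambda_\sigma^{-p}Z_\sigma\mid \sigma\in\Gamma\}$ is an orthonormal basis of $\mathcal{S}_p(M)$, the algebraic span of $\{Z_\sigma\}$ is dense in $\mathcal{S}_p(M)$ for each $p\geq 0$, and therefore $\mathcal{S}(M)$ is dense in every $\mathcal{S}_p(M)$. With this in hand, the set equality is immediate. Given $\Phi\in \mathcal{S}^*(M)$, continuity at $0$ in the topology generated by $\{\|\cdot\|_p\}_{p\geq 0}$ produces $p\geq 0$ and $C>0$ with $|\Phi(\xi)|\leq C\|\xi\|_p$ for all $\xi\in\mathcal{S}(M)$, after which density gives a unique continuous extension to $\mathcal{S}_p(M)$, i.e.\ $\Phi\in \mathcal{S}_p^*(M)$. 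Conversely, any $\Phi\in\mathcal{S}_p^*(M)$ restricts to a $\|\cdot\|_p$-continuous, hence $\mathcal{S}(M)$-continuous, functional, yielding $\mathcal{S}_p^*(M)\subset\mathcal{S}^*(M)$.

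For the coincidence of topologies I would invoke the standard dichotomy for a countably-Hilbert space that is also nuclear: the inductive limit topology on $\bigcup_p \mathcal{S}_p^*(M)$ is always finer than the strong topology, so only the reverse inclusion of topologies needs proof. By Lemma~\ref{lem-3-3}, for each $p$ there is $q>p$ so that $i_{pq}\colon \mathcal{S}_q(M)\to\mathcal{S}_p(M)$ is Hilbert--Schmidt, and the standard argument (cf.\ Gelfand--Vilenkin, Becnel) shows that every strongly bounded subset of $\mathcal{S}^*(M)$ is then contained and norm-bounded in some single $\mathcal{S}_p^*(M)$. Consequently, a strong neighborhood of $0$ in $\mathcal{S}^*(M)$ absorbs the images of the $\mathcal{S}_p^*(M)$-balls under the inclusion maps, so the identity from the strong dual to the inductive limit is continuous. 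Combined with the trivial direction, this gives the required coincidence.

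The main obstacle is the topological half of the statement. The set-level description and its dual pairing are routine, but the identification of the strong and inductive limit topologies is only valid because $\mathcal{S}(M)$ is nuclear; without Lemma~\ref{lem-3-3} one cannot conclude that every strongly bounded set in $\mathcal{S}^*(M)$ already lives in some $\mathcal{S}_p^*(M)$, and the inductive limit topology would strictly refine the strong one. The whole argument therefore reduces to verifying that nuclearity feeds correctly into the general theorem and then quoting it.
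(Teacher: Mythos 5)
Your first half is sound and is essentially what the paper leaves implicit: the paper gives no written proof of Lemma~\ref{lem-3-4} at all, simply calling it an immediate consequence of the general theory of countably-Hilbert spaces with a pointer to \cite{becnel} and \cite{gelfand}, so fleshing out that citation is the right genre of argument. For the set equality (\ref{eq-3-11}), your two ingredients are correct and complete: continuity of $\Phi$ in the topology generated by the increasing norm family $\{\|\cdot\|_p\}_{p\geq 0}$ yields a single estimate $|\Phi(\xi)|\leq C\|\xi\|_p$, and density of $\mathcal{S}(M)$ in each $\mathcal{S}_p(M)$ (which the paper itself records by noting that $\mathcal{S}_p(M)$ is the completion of $\mathcal{S}(M)$ under $\|\cdot\|_p$) gives the extension; the reverse inclusion is trivial as you say.

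The topological half, however, has a genuine gap, and your diagnosis of where the difficulty lies is also misplaced. Your concluding step --- ``a strong neighborhood of $0$ absorbs the images of the $\mathcal{S}_p^*(M)$-balls, so the identity from the strong dual to the inductive limit is continuous'' --- does not prove the nontrivial direction: that the strong neighborhoods absorb the dual balls is just the statement that those balls are strongly bounded, which is part of the easy inclusion (inductive finer than strong) you had already granted. What the hard direction actually requires is that every convex balanced subset of $\mathcal{S}^*(M)$ that absorbs all strongly bounded sets is a strong neighborhood of $0$, i.e., that the strong dual is bornological; this follows because a countably-Hilbert space is reflexive, hence distinguished, and that ingredient is entirely missing from your sketch, so the chain you wrote does not close. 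Moreover, your claim that the coincidence of topologies ``is only valid because $\mathcal{S}(M)$ is nuclear'' and that without Lemma~\ref{lem-3-3} the inductive limit topology would strictly refine the strong one is false: the theorem of \cite{becnel} that the paper is quoting holds for arbitrary countably-Hilbert spaces, and the localization of strongly bounded subsets of the dual into a single $\mathcal{S}_p^*(M)$, with norm boundedness there, is a Baire-category fact about countably normed spaces (\cite{gelfand}), not a consequence of nuclearity. Nuclearity would be a legitimate \emph{sufficient} hypothesis for an alternative route (nuclear Fr\'{e}chet spaces are Montel, hence reflexive and distinguished), but hanging the proof on Lemma~\ref{lem-3-3} misrepresents the logical structure of the result, and the specific deduction offered proves only the direction that was never in doubt.
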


We mention that, by identifying $\mathcal{L}^2(M)$ with its dual, one comes to a Gel'fand triple
\begin{equation}\label{eq-3-12}
\mathcal{S}(M)\subset \mathcal{L}^2(M)\subset \mathcal{S}^*(M),
\end{equation}
which we refer to as the Gel'fand triple associated with $M$.

\begin{theorem}\label{thr-3-5}
The system $\{Z_{\sigma} \mid \sigma \in \Gamma\}$ is contained in $\mathcal{S}(M)$ and moreover it forms a basis for $\mathcal{S}(M)$ in the sense that
\begin{equation}\label{eq-3-13}
  \xi = \sum_{\sigma \in \Gamma} \langle Z_{\sigma}, \xi\rangle Z_{\sigma}, \quad \xi \in \mathcal{S}(M),
\end{equation}
where $\langle\cdot,\cdot\rangle$ is the inner product of $\mathcal{L}^2(M)$ and the series converges in the topology of $\mathcal{S}(M)$.
\end{theorem}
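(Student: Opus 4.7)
The plan is to verify that each $Z_\sigma$ belongs to $\mathcal{S}(M)$ and then to establish the expansion (\ref{eq-3-13}) by applying Lemma~\ref{lem-3-2} simultaneously at every level $p\geq 0$. For the membership claim, I would recall the calculation already used in the proof of Lemma~\ref{lem-3-2}, which gives $\|Z_\sigma\|_p = \lambda_\sigma^p < \infty$ for every $p\geq 0$ and every $\sigma\in\Gamma$. Consequently $Z_\sigma \in \mathcal{S}_p(M)$ for all $p$, so $Z_\sigma \in \bigcap_{p\geq 0}\mathcal{S}_p(M) = \mathcal{S}(M)$.

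For the expansion, let $\xi \in \mathcal{S}(M)$ and fix an arbitrary $p\geq 0$. By definition $\xi \in \mathcal{S}_p(M)$, and Lemma~\ref{lem-3-2} tells us that $\{\lambda_\sigma^{-p}Z_\sigma \mid \sigma\in\Gamma\}$ is an orthonormal basis of this Hilbert space. The abstract Parseval expansion therefore yields
\[
\xi = \sum_{\sigma\in\Gamma}\bigl\langle \lambda_\sigma^{-p}Z_\sigma,\xi\bigr\rangle_p\, \lambda_\sigma^{-p}Z_\sigma,
\]
with convergence in the norm $\|\cdot\|_p$. Rewriting the coefficient via identity (\ref{eq-3-7}) as $\langle \lambda_\sigma^{-p}Z_\sigma,\xi\rangle_p = \lambda_\sigma^p \langle Z_\sigma,\xi\rangle$, the factors $\lambda_\sigma^{\pm p}$ cancel and the expansion collapses to
\[
\xi = \sum_{\sigma\in\Gamma}\langle Z_\sigma,\xi\rangle\, Z_\sigma
\]
with convergence in $\|\cdot\|_p$. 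Because $p\geq 0$ was arbitrary and the topology of $\mathcal{S}(M)$ is generated by the family $\{\|\cdot\|_p\}_{p\geq 0}$, this single series converges to $\xi$ in the topology of $\mathcal{S}(M)$, which is what (\ref{eq-3-13}) asserts.

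The essential observation — and the only step requiring care — is the $p$-independence of the coefficient in the expansion. Lemma~\ref{lem-3-2} provides an expansion in each $\mathcal{S}_p(M)$, but \emph{a priori} with $p$-dependent coefficients $\langle \lambda_\sigma^{-p}Z_\sigma,\xi\rangle_p$; the content of formula (\ref{eq-3-7}) is precisely that, after restoring the basis normalisation, all these coefficients coincide with the single $\mathcal{L}^2(M)$-Fourier coefficient $\langle Z_\sigma,\xi\rangle$. Beyond this identification, no genuine obstacle is expected: the result is essentially a repackaging of Lemma~\ref{lem-3-2} together with the countably-Hilbert structure of $\mathcal{S}(M)$ recorded in (\ref{eq-3-8})--(\ref{eq-3-9}).
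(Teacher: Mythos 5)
Your proposal is correct and follows essentially the same route as the paper's own proof: membership via $\|Z_\sigma\|_p=\lambda_\sigma^p<\infty$, the Parseval expansion of $\xi$ in the orthonormal basis $\{\lambda_\sigma^{-p}Z_\sigma\}$ of each $\mathcal{S}_p(M)$, and the identity $\langle\lambda_\sigma^{-p}Z_\sigma,\xi\rangle_p=\lambda_\sigma^p\langle Z_\sigma,\xi\rangle$ (the paper's (\ref{eq-3-14}), your (\ref{eq-3-7})) to see that the coefficients are independent of $p$, giving convergence in every $\|\cdot\|_p$ and hence in the topology of $\mathcal{S}(M)$. Your closing remark correctly identifies the $p$-independence of the coefficients as the crux, which is exactly the point the paper's proof turns on.
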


\begin{proof}
It follows from Lemma~\ref{lem-3-2} and the definition of $\mathcal{S}(M)$ that the system $\{Z_{\sigma}\mid \sigma\in\Gamma\}$ is contained in $\mathcal{S}(M)$.
Let $\xi \in \mathcal{S}(M)$. Then, for each $p\geq 0$, we have $\xi \in \mathcal{S}_p(M)$, which together with Lemma~\ref{lem-3-2} gives
\begin{equation*}
  \xi = \sum_{\sigma \in \Gamma} \big\langle \lambda^{-p}_{\sigma}Z_{\sigma}, \xi\big\rangle_p \lambda^{-p}_{\sigma}Z_{\sigma},
\end{equation*}
where the series on the righthand side converges in norm $\|\cdot\|_p$. On the other hand, we find
\begin{equation}\label{eq-3-14}
  \langle \lambda_{\sigma}^{-p}Z_{\sigma}, \xi \rangle_p
      = \lambda_{\sigma}^p\langle Z_{\sigma}, \xi\rangle,\quad p\geq 0.
\end{equation}
Thus
\begin{equation*}
  \xi = \sum_{\sigma \in \Gamma} \langle Z_{\sigma}, \xi\rangle Z_{\sigma}
\end{equation*}
and the series on the righthand side converges in $\|\cdot\|_p$ for each $p\geq 0$, namely in the topology of $\mathcal{S}(M)$.
\end{proof}

\begin{definition}\label{def-3-1}
Elements of $\mathcal{S}^*(M)$ are called generalized functionals of $M$, while elements of $\mathcal{S}(M)$ are called testing functionals of $M$.
\end{definition}

As mentioned above, by identifying $\mathcal{L}^2(M)$ with its dual, one has the Gel'fand triple associated with $M$, namely
$\mathcal{S}(M)\subset \mathcal{L}^2(M)\subset \mathcal{S}^*(M)$,
which justifies this definition.

\section{Characterization theorems}\label{sec-4}

Let $M=(M_n)_{n\in \mathbb{N}}$ be the same as in Section~\ref{sec-3}.
In this section, we establish some characterization theorems for generalized functionals of $M$,
which are our main results.

We continue to use the notions and notation made in previous sections. Additionally, we denote by $\langle\!\langle \cdot,\cdot\rangle\!\rangle$
the canonical bilinear form on $\mathcal{S}^*(M)\times \mathcal{S}(M)$, namely
\begin{equation}\label{eq-4-1}
  \langle\!\langle \Phi,\xi\rangle\!\rangle = \Phi(\xi),\quad \Phi\in \mathcal{S}^*(M),\, \xi\in \mathcal{S}(M).
\end{equation}
Note that $\langle\cdot,\cdot\rangle$ denotes the inner product of $\mathcal{L}^2(M)$, which is different from
$\langle\!\langle \cdot,\cdot\rangle\!\rangle$.

Recall that $\{Z_{\sigma}\mid \sigma\in\Gamma\} \subset \mathcal{S}(M)$. This allows us
to introduce the following definition.

\begin{definition}\label{def-4-1}
For $\Phi \in S^*(\Omega)$, its Fock transform is the function $\widehat{\Phi}$ on $\Gamma$ given by
\begin{equation}\label{eq-4-2}
  \widehat{\Phi}(\sigma) = \langle\!\langle \Phi, Z_{\sigma}\rangle\!\rangle,\quad \sigma \in \Gamma,
\end{equation}
where $\langle\!\langle \cdot,\cdot\rangle\!\rangle$ is the canonical bilinear form.
\end{definition}

The theorem below shows that a generzlized functional of $M$ is completely determined by its Fock tramsform.

\begin{theorem}\label{thr-4-1}
Let $\Phi$, $\Psi \in \mathcal{S}^*(M)$. Then $\Phi=\Psi$ if and only if $\widehat{\Phi}=\widehat{\Psi}$.
\end{theorem}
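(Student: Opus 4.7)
The forward direction is immediate from Definition~\ref{def-4-1}: if $\Phi = \Psi$ as elements of $\mathcal{S}^*(M)$, then in particular $\langle\!\langle \Phi, Z_\sigma\rangle\!\rangle = \langle\!\langle \Psi, Z_\sigma\rangle\!\rangle$ for every $\sigma \in \Gamma$, so $\widehat{\Phi} = \widehat{\Psi}$.

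For the nontrivial direction, my plan is to leverage Theorem~\ref{thr-3-5}, which provides an explicit basis expansion for every testing functional. Assume $\widehat{\Phi} = \widehat{\Psi}$, and set $\Theta = \Phi - \Psi \in \mathcal{S}^*(M)$. By hypothesis $\langle\!\langle \Theta, Z_\sigma\rangle\!\rangle = 0$ for every $\sigma \in \Gamma$. To conclude $\Theta = 0$, it suffices to show $\langle\!\langle \Theta, \xi\rangle\!\rangle = 0$ for an arbitrary $\xi \in \mathcal{S}(M)$.

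Given such a $\xi$, Theorem~\ref{thr-3-5} supplies the expansion
\begin{equation*}
  \xi = \sum_{\sigma\in\Gamma} \langle Z_\sigma, \xi\rangle\, Z_\sigma,
\end{equation*}
with convergence in the topology of $\mathcal{S}(M)$. Since $\Theta \in \mathcal{S}^*(M)$ is a continuous linear functional on $\mathcal{S}(M)$, I can interchange $\Theta$ with the series and obtain
\begin{equation*}
  \langle\!\langle \Theta, \xi\rangle\!\rangle
  = \sum_{\sigma\in\Gamma} \langle Z_\sigma, \xi\rangle\, \langle\!\langle \Theta, Z_\sigma\rangle\!\rangle
  = 0.
\end{equation*}
As $\xi$ was arbitrary, $\Theta$ annihilates all of $\mathcal{S}(M)$, hence $\Theta = 0$ and $\Phi = \Psi$.

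There is essentially no obstacle here: the argument is a routine consequence of (i) the density/basis statement from Theorem~\ref{thr-3-5} and (ii) continuity of elements of $\mathcal{S}^*(M)$ on the nuclear space $\mathcal{S}(M)$. The only small point worth double-checking is that the interchange of $\Theta$ with the infinite sum is justified by continuity in the $\mathcal{S}(M)$-topology (which is what Theorem~\ref{thr-3-5} guarantees for the convergence), rather than merely in some $\|\cdot\|_p$-norm; but since convergence in the topology of $\mathcal{S}(M)$ implies convergence in every $\|\cdot\|_p$, and $\Theta$ is continuous with respect to some $\|\cdot\|_p$ by Lemma~\ref{lem-3-4}, this is automatic.
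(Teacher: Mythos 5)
Your proof is correct and follows essentially the same route as the paper's: both rely on the basis expansion of Theorem~\ref{thr-3-5} together with the continuity of the functionals to pass them through the series; your only cosmetic variation is working with the difference $\Theta=\Phi-\Psi$ rather than comparing $\langle\!\langle\Phi,\xi\rangle\!\rangle$ and $\langle\!\langle\Psi,\xi\rangle\!\rangle$ directly. Your closing remark correctly identifies why the interchange is justified, matching the paper's (implicit) use of Lemma~\ref{lem-3-4}.
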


\begin{proof}
Clearly, we need only to prove the ``if" part. To do so, we assume $\widehat{\Phi}=\widehat{\Psi}$. Then, for each
$\xi \in \mathcal{S}(M)$, by using Theorem~\ref{thr-3-5} and the continuity of $\Phi$ and $\Psi$ we have
\begin{equation*}
\begin{split}
\langle\!\langle \Phi,\xi\rangle\!\rangle
      &= \sum_{\sigma \in \Gamma} \langle Z_{\sigma}, \xi\rangle \langle\!\langle \Phi,Z_{\sigma}\rangle\!\rangle
       = \sum_{\sigma \in \Gamma} \langle Z_{\sigma}, \xi\rangle \widehat{\Phi}(\sigma)\\
      &= \sum_{\sigma \in \Gamma} \langle Z_{\sigma}, \xi\rangle \widehat{\Psi}(\sigma)
      = \sum_{\sigma \in \Gamma} \langle Z_{\sigma}, \xi\rangle \langle\!\langle \Psi,Z_{\sigma}\rangle\!\rangle
      = \langle\!\langle \Psi,\xi\rangle\!\rangle.
\end{split}
\end{equation*}
Thus $\Phi=\Psi$.
\end{proof}

\begin{theorem}\label{thr-4-2}
Let $\Phi \in \mathcal{S}^*(M)$. Then there exist constants $C\geq 0$ and $p\geq 0$ such that
\begin{equation}\label{eq-4-3}
  |\widehat{\Phi}(\sigma)| \leq C\lambda_{\sigma}^p,\quad \sigma \in \Gamma.
\end{equation}
\end{theorem}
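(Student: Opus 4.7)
The strategy is to directly exploit the representation $\mathcal{S}^*(M) = \bigcup_{p=0}^{\infty} \mathcal{S}_p^*(M)$ furnished by Lemma~\ref{lem-3-4} together with the explicit norm computation $\|Z_{\sigma}\|_p = \lambda_{\sigma}^p$ already observed in the proof of Lemma~\ref{lem-3-2}. The bound in \eqref{eq-4-3} is then essentially just the definition of the operator norm of $\Phi$ tested on the basis vectors $Z_{\sigma}$.

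Concretely, I would proceed in three short steps. First, invoke Lemma~\ref{lem-3-4} to obtain some $p \geq 0$ with $\Phi \in \mathcal{S}_p^*(M)$; that is, $\Phi$ extends to a continuous linear functional on the Hilbert space $\mathcal{S}_p(M)$. Second, set $C = \|\Phi\|_{-p}$, so that by the definition of the dual norm
\begin{equation*}
  |\langle\!\langle \Phi, \xi \rangle\!\rangle| \leq C \|\xi\|_p, \quad \xi \in \mathcal{S}_p(M).
\end{equation*}
Third, specialize $\xi = Z_{\sigma}$ for $\sigma \in \Gamma$; since $Z_{\sigma} \in \mathcal{S}(M) \subset \mathcal{S}_p(M)$ with $\|Z_{\sigma}\|_p = \lambda_{\sigma}^p$ (a direct computation using \eqref{eq-3-4} and the orthonormality of $\{Z_{\tau}\}$), we obtain
\begin{equation*}
  |\widehat{\Phi}(\sigma)| = |\langle\!\langle \Phi, Z_{\sigma} \rangle\!\rangle| \leq C \|Z_{\sigma}\|_p = C \lambda_{\sigma}^p,
\end{equation*}
which is precisely \eqref{eq-4-3}.

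There is no real obstacle here; the nontrivial analytic content has already been absorbed into the construction of the chain $\{\mathcal{S}_p(M)\}$ and the identification of $\mathcal{S}^*(M)$ with the union of the dual Hilbert spaces via Lemma~\ref{lem-3-4}. The only thing to be mildly careful about is the choice of $C$ and $p$: $p$ is not universal, it depends on $\Phi$ and is produced by Lemma~\ref{lem-3-4}, and $C$ is then the corresponding dual norm $\|\Phi\|_{-p}$. This dependence is exactly what makes \eqref{eq-4-3} a \emph{growth condition} in the sense advertised in the abstract, and it is worth remarking that the converse direction (recovering $\Phi \in \mathcal{S}^*(M)$ from such a growth bound on a function on $\Gamma$) is the substantive characterization theorem to be addressed afterwards.
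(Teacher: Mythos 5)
Your proof is correct and follows essentially the same route as the paper: invoke Lemma~\ref{lem-3-4} to place $\Phi$ in some $\mathcal{S}_p^*(M)$, take $C=\|\Phi\|_{-p}$, and test the dual-norm inequality on $Z_{\sigma}$ using $\|Z_{\sigma}\|_p=\lambda_{\sigma}^p$. Your additional remarks on the $\Phi$-dependence of $p$ and $C$ are accurate but not needed for the argument.
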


\begin{proof}
By Lemma~\ref{lem-3-4}, there exists some $p\geq 0$ such that $\Phi \in \mathcal{S}^*_p(M)$.
Now write $C=\|\Phi\|_{-p}$. Then, for each $\sigma \in \Gamma$, we have
\begin{equation*}
|\widehat{\Phi}(\sigma)|
   = |\langle\!\langle \Phi, Z_{\sigma}\rangle\!\rangle|
   \leq C\|Z_{\sigma}\|_p
   = C\lambda_{\sigma}^p.
\end{equation*}
This completes the proof.
\end{proof}

\begin{theorem}\label{thr-4-3}
Let $F$ be a function on $\Gamma$ satisfying
\begin{equation}\label{eq-4-4}
  |F(\sigma)| \leq C\lambda_{\sigma}^p,\quad \sigma \in \Gamma
\end{equation}
for some constants $C\geq 0$ and $p\geq 0$.
Then there exists a unique $\Phi \in \mathcal{S}^*(M)$ such that $F=\widehat{\Phi}$.
\end{theorem}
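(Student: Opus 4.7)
The plan is to construct $\Phi$ directly by prescribing its values on the basis $\{Z_\sigma\}$ via $F$, and then to use Lemma~\ref{lem-wz} to tame the resulting series. Specifically, for $\xi \in \mathcal{S}(M)$ I would define
\begin{equation*}
  \Phi(\xi) = \sum_{\sigma \in \Gamma} F(\sigma) \langle Z_\sigma, \xi\rangle,
\end{equation*}
taking advantage of the fact that $\langle Z_\sigma, \cdot\rangle$ is linear in the second argument, so that $\Phi$ is automatically linear in $\xi$ once the sum converges. Formally this is the ``series'' $\sum_\sigma F(\sigma) Z_\sigma$ tested against $\xi$, which is consistent with the expansion $\xi = \sum_\sigma \langle Z_\sigma,\xi\rangle Z_\sigma$ furnished by Theorem~\ref{thr-3-5}.

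The main work is to show absolute convergence and continuity in some Hilbert seminorm $\|\cdot\|_q$. Given the growth bound $|F(\sigma)|\leq C\lambda_\sigma^p$, I would pick $q > p$ with $2(q-p) > 1$, so that by Lemma~\ref{lem-wz} the series $\sum_\sigma \lambda_\sigma^{-2(q-p)}$ is finite. Splitting $\lambda_\sigma^p = \lambda_\sigma^{-(q-p)} \cdot \lambda_\sigma^q$ and applying the Cauchy--Schwarz inequality to the sum gives
\begin{equation*}
  |\Phi(\xi)| \leq C \Bigl(\sum_{\sigma \in \Gamma} \lambda_\sigma^{-2(q-p)}\Bigr)^{1/2} \Bigl(\sum_{\sigma \in \Gamma} \lambda_\sigma^{2q}|\langle Z_\sigma,\xi\rangle|^2\Bigr)^{1/2} = K \|\xi\|_q,
\end{equation*}
with $K$ a finite constant depending only on $C$, $p$, $q$. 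This shows simultaneously that the defining series converges absolutely and that $\Phi$ extends to a continuous linear functional on $\mathcal{S}_q(M)$, hence $\Phi \in \mathcal{S}_q^*(M) \subset \mathcal{S}^*(M)$ by Lemma~\ref{lem-3-4}.

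To finish, evaluating $\Phi$ at $\xi = Z_\tau$ collapses the sum via the orthonormality of $\{Z_\sigma\}$ to give $\widehat{\Phi}(\tau) = \langle\!\langle \Phi, Z_\tau\rangle\!\rangle = F(\tau)$, so $\widehat{\Phi} = F$. Uniqueness is immediate from Theorem~\ref{thr-4-1}: any other $\Psi \in \mathcal{S}^*(M)$ with $\widehat{\Psi} = F = \widehat{\Phi}$ must coincide with $\Phi$. The only genuine difficulty is the choice of $q$: one needs the summability exponent on $\lambda_\sigma$ to exceed $1$ after subtracting $p$, which is exactly what Lemma~\ref{lem-wz} accommodates, and this is where the nuclearity-style slack built into the Gel'fand triple (cf.\ Lemma~\ref{lem-3-3}) is essential.
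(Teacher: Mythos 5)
Your proposal is correct and follows essentially the same route as the paper's own proof: the same functional $\Phi(\xi)=\sum_{\sigma\in\Gamma}F(\sigma)\langle Z_\sigma,\xi\rangle$, the same choice $q>p+\tfrac12$ with the splitting $\lambda_\sigma^p=\lambda_\sigma^{-(q-p)}\lambda_\sigma^{q}$, Cauchy--Schwarz combined with Lemma~\ref{lem-wz} to get $|\Phi(\xi)|\leq K\|\xi\|_q$, and uniqueness via Theorem~\ref{thr-4-1}. The only cosmetic difference is that the paper phrases the second Cauchy--Schwarz factor through the inner product $\langle\lambda_\sigma^{-q}Z_\sigma,\xi\rangle_q$ rather than directly as $\sum_{\sigma\in\Gamma}\lambda_\sigma^{2q}|\langle Z_\sigma,\xi\rangle|^2$, which is the same quantity.
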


\begin{proof}
We first show that the series $\sum_{\sigma \in \Gamma} \langle Z_{\sigma}, \xi\rangle F(\sigma)$ absolutely converges for each $\xi \in \mathcal{S}(M)$,
where $\langle\cdot,\cdot\rangle$ is the inner product of $\mathcal{L}^2(M)$.
In fact, by taking $q>p+\frac{1}{2}$, we have
\begin{equation*}
  \sum_{\sigma \in \Gamma}|\langle Z_{\sigma}, \xi\rangle F(\sigma)|
   = \sum_{\sigma \in \Gamma}  |\lambda_{\sigma}^q\langle Z_{\sigma}, \xi\rangle|  \lambda_{\sigma}^{-q}|F(\sigma)|,\quad \xi \in \mathcal{S}(M),
\end{equation*}
which, together with the following equalities
\begin{equation*}
 \lambda_{\sigma}^q\langle Z_{\sigma}, \xi\rangle = \langle\lambda_{\sigma}^{-q} Z_{\sigma}, \xi\rangle_{q},\quad
\|\xi\|_q^2 = \sum_{\sigma \in \Gamma}|\langle\lambda_{\sigma}^{-q} Z_{\sigma}, \xi\rangle_{q}|^2
\end{equation*}
and the condition described by (\ref{eq-4-4}), gives
\begin{equation}\label{eq-4-5}
\begin{split}
  \sum_{\sigma \in \Gamma}|\langle Z_{\sigma}, \xi\rangle F(\sigma)|
      &\leq C\sum_{\sigma \in \Gamma}|\langle\lambda_{\sigma}^{-q} Z_{\sigma}, \xi\rangle_{q}|
          \lambda_{\sigma}^{-(q-p)}\\
   &\leq C\bigg[\sum_{\sigma \in \Gamma}|\langle\lambda_{\sigma}^{-q} Z_{\sigma}, \xi\rangle_{q}|^2\bigg]^{\frac{1}{2}}
         \bigg[\sum_{\sigma \in \Gamma}\lambda_{\sigma}^{-2(q-p)}\bigg]^{\frac{1}{2}}\\
   &= C\|\xi\|_q\bigg[\sum_{\sigma \in \Gamma}\lambda_{\sigma}^{-2(q-p)}\bigg]^{\frac{1}{2}}
 \end{split}
\end{equation}
with $\xi \in \mathcal{S}(M)$, which together with Lemma~\ref{lem-wz} yields that
\begin{equation*}
\sum_{\sigma \in \Gamma}|\langle Z_{\sigma}, \xi\rangle F(\sigma)|< \infty,\quad \xi \in \mathcal{S}(M),
\end{equation*}
namely the series $\sum_{\sigma \in \Gamma} \langle Z_{\sigma}, \xi\rangle F(\sigma)$ absolutely converges for each $\xi \in \mathcal{S}(M)$.

We now show that there exists a unique $\Phi \in \mathcal{S}^*(M)$ such that $F=\widehat{\Phi}$. In fact,
we can define a functional $\Phi$ on $\mathcal{S}(M)$ as
\begin{equation}\label{eq-4-6}
  \Phi(\xi) = \sum_{\sigma \in \Gamma} \langle Z_{\sigma}, \xi\rangle F(\sigma), \quad \xi \in \mathcal{S}(M).
\end{equation}
Clearly, $\Phi$ is a linear functional on $\mathcal{S}(M)$. Moreover, by using (\ref{eq-4-5}), we find
\begin{equation*}
  |\Phi(\xi)| \leq C\bigg[\sum_{\sigma \in \Gamma}\lambda_{\sigma}^{-2(q-p)}\bigg]^{\frac{1}{2}}\|\xi\|_{q}, \quad \xi \in \mathcal{S}(M),
\end{equation*}
where $q>p+\frac{1}{2}$. Thus $\Phi \in \mathcal{S}^*(M)$. A simple calculation gives that $\widehat{\Phi} =F$ and, finally, Theorem~\ref{thr-4-1} implies that
such a $\Phi \in \mathcal{S}^*(M)$ is unique.
\end{proof}

Theorems~\ref{thr-4-2} and \ref{thr-4-3} characterize generalized functionals of $M$ through their Fock transforms.
As an immediate consequence of these two theorems, we come to the next corollary, which offers a criterion for checking whether or not
a function on $\Gamma$ is the Fock transform of a generalized functional of $M$.

\begin{corollary}\label{corol-4-1}
Let $F$ be a function on $\Gamma$. Then $F$ is the Fock transform of an element of $\mathcal{S}^*(M)$ if and only if it satisfies
\begin{equation}\label{eq-4-7}
  |F(\sigma)| \leq C\lambda_{\sigma}^p,\quad  \sigma \in \Gamma,
\end{equation}
where $C\geq 0$ and $p\geq 0$ are some constants independant of $\sigma \in \Gamma$.
\end{corollary}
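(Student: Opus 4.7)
The corollary is essentially a repackaging of Theorems~\ref{thr-4-2} and \ref{thr-4-3}, which together provide the two implications. So my plan is simply to split into the two directions and invoke the appropriate theorem in each case.

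For the necessity direction, I would assume $F=\widehat{\Phi}$ for some $\Phi\in\mathcal{S}^*(M)$. Then Theorem~\ref{thr-4-2} delivers constants $C\geq 0$ and $p\geq 0$ such that $|\widehat{\Phi}(\sigma)|\leq C\lambda_\sigma^p$ for all $\sigma\in\Gamma$, which is exactly the bound (\ref{eq-4-7}) for $F$. For the sufficiency direction, I would assume $F$ satisfies (\ref{eq-4-7}). Then Theorem~\ref{thr-4-3} supplies a (unique) $\Phi\in\mathcal{S}^*(M)$ with $\widehat{\Phi}=F$, so $F$ is the Fock transform of an element of $\mathcal{S}^*(M)$.

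Since both implications are immediate quotations of already-proved theorems, no new estimates or constructions are needed, and there is no genuine obstacle—the work has already been done in Theorems~\ref{thr-4-2} and \ref{thr-4-3}. The only thing worth noting is that Theorem~\ref{thr-4-1} ensures the $\Phi$ produced in the sufficiency direction is unique, although uniqueness is not part of the corollary's statement. I would keep the proof to a few lines, making clear that this is just the combination of the two characterization theorems.
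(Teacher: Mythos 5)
Your proposal is correct and matches the paper exactly: the paper offers no separate proof, stating the corollary is an immediate consequence of Theorems~\ref{thr-4-2} (necessity) and \ref{thr-4-3} (sufficiency), which is precisely your two-implication split. Your side remark that Theorem~\ref{thr-4-1} guarantees uniqueness is consistent with the paper, where uniqueness is already built into the statement of Theorem~\ref{thr-4-3}.
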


\begin{remark}
The condition described by (\ref{eq-4-7}) is actually a type of growth condition. This corollary then shows that growth condition is enough to characterize
generalized functionals of $M$.
\end{remark}

Let $\eta \in \mathcal{S}(M)$. Then there exists a continuous linear functional $\Phi_{\eta}$ on $\mathcal{L}^2(M)$ such that
$\|\eta\| = \sup\!\big\{\, |\Phi_{\eta}(\xi)| \mid \|\xi\|= 1,\, \xi\in \mathcal{L}^2(M)\, \big\}$
and
\begin{equation}\label{eq-4-8}
  \Phi_{\eta}(\xi) = \langle\eta,\xi\rangle,\quad  \xi \in \mathcal{L}^2(M),
\end{equation}
where $\langle\cdot,\cdot\rangle$ and $\|\cdot\|$ are the inner product and norm of $\mathcal{L}^2(M)$, respectively.
As a functional on $\mathcal{S}(M)$, $\Phi_{\eta}$ is obviously continuous with respect to the topology of $\mathcal{S}(M)$, thus $\Phi_{\eta} \in \mathcal{S}^*(M)$.
Based on these observations, we come to the next theorem,  which actually offers a characterization of testing functionals of $M$.

\begin{theorem}
Let $F$ be a function on $\Gamma$. If $F$ satisfies that for each $p\geq 0$ there exists $C\geq 0$ such that
\begin{equation}\label{eq-4-9}
  |F(\sigma)| \leq C\lambda_{\sigma}^{-p},\quad \sigma \in \Gamma,
\end{equation}
then there exists a unique $\eta \in \mathcal{S}(M)$ such that $\widehat{\Phi_{\eta}}=F$. Conversely, if $F=\widehat{\Phi_{\eta}}$ for some
$\eta \in \mathcal{S}(M)$, then for each $p\geq 0$ there exists $C\geq 0$ such that (\ref{eq-4-9}) holds.
\end{theorem}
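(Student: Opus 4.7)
The plan is to treat sufficiency and necessity separately, with sufficiency being where the real work lies. Given $F$ with rapid decay, I would produce $\eta \in \mathcal{S}(M)$ by expanding it directly in the orthonormal basis $\{Z_\sigma\mid \sigma\in\Gamma\}$. Since I want $\widehat{\Phi_\eta}(\sigma) = \langle \eta, Z_\sigma \rangle$ to equal $F(\sigma)$ and $\langle\cdot,\cdot\rangle$ is conjugate-linear in its first argument, the correct choice of coefficients is $\overline{F(\sigma)}$; so the candidate is
\[
  \eta = \sum_{\sigma \in \Gamma} \overline{F(\sigma)}\, Z_\sigma,
\]
interpreted as the limit of its partial sums in the appropriate Hilbert spaces.

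The main substantive step is verifying that this series converges in $\mathcal{S}(M)$, i.e.\ in every $\|\cdot\|_p$. For fixed $p\geq 0$, I would apply the hypothesis with exponent $p+1$ to obtain a constant $C_{p+1}$ with $|F(\sigma)|\leq C_{p+1}\lambda_\sigma^{-(p+1)}$, so that
\[
  \sum_{\sigma\in\Gamma}\lambda_\sigma^{2p}|F(\sigma)|^2 \leq C_{p+1}^{2}\sum_{\sigma\in\Gamma}\lambda_\sigma^{-2} < \infty
\]
by Lemma~\ref{lem-wz}. This bounds the $p$-norm of the partial sums uniformly, giving $\eta \in \mathcal{S}_p(M)$ for every $p$, hence $\eta \in \mathcal{S}(M)$. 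A one-line orthonormality calculation then yields $\widehat{\Phi_\eta}(\sigma) = \langle \eta, Z_\sigma\rangle = F(\sigma)$. Uniqueness comes from Theorem~\ref{thr-4-1}: if $\widehat{\Phi_{\eta'}} = F$ for another $\eta'\in\mathcal{S}(M)$, then $\Phi_\eta = \Phi_{\eta'}$ on $\mathcal{S}(M)$, and since $\{Z_\sigma\mid \sigma \in \Gamma\}$ is total in $\mathcal{L}^2(M)$ this forces $\eta = \eta'$.

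The converse is essentially a one-line estimate. Given $\eta\in\mathcal{S}(M)$ and $p\geq 0$, I have $\eta\in\mathcal{S}_p(M)$ with $\|\eta\|_p < \infty$, and for each $\sigma$ the single term $\lambda_\sigma^{2p}|\langle Z_\sigma,\eta\rangle|^2$ is dominated by the full sum $\|\eta\|_p^2$, yielding $|F(\sigma)| = |\langle Z_\sigma, \eta\rangle| \leq \|\eta\|_p\, \lambda_\sigma^{-p}$, so $C = \|\eta\|_p$ suffices. The only genuine pitfall in the whole argument is keeping the complex conjugates straight when setting up $\eta$; once that bookkeeping is done, everything reduces to the same summability estimate driven by Lemma~\ref{lem-wz} already used in the proofs of Lemma~\ref{lem-3-3} and Theorem~\ref{thr-4-3}.
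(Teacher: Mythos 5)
Your proposal is correct and takes essentially the same route as the paper's own proof: the same candidate $\eta=\sum_{\sigma\in\Gamma}\overline{F(\sigma)}\,Z_{\sigma}$, the same summability estimate $\sum_{\sigma\in\Gamma}\lambda_{\sigma}^{2p}|F(\sigma)|^{2}<\infty$ via Lemma~\ref{lem-wz} (the paper picks any $q>p$ with $2(q-p)>1$ where you fix $q=p+1$), uniqueness via Theorem~\ref{thr-4-1}, and the same one-line converse bound $|F(\sigma)|\leq\|\eta\|_{p}\lambda_{\sigma}^{-p}$. One phrasing point: a uniform bound on the $\|\cdot\|_{p}$-norms of the partial sums does not by itself give convergence; as the paper notes explicitly, you should invoke that the series is orthogonal in $\mathcal{S}_{p}(M)$, so that the square-summability you computed is exactly the criterion for convergence in $\|\cdot\|_{p}$.
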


\begin{proof}
The second part of the theorem can be proved easily. Here we only give a proof to the first part.

To do so, we consider the series $\sum_{\sigma \in \Gamma}\overline{F(\sigma)}Z_{\sigma}$ in $\mathcal{S}(M)$.
Let $p\geq 0$. Then we can take $q>p$ such that $2(q-p)>1$. By the condition on $F$, there exists a constant $C_0\geq 0$ such that
\begin{equation*}
  |F(\sigma)| \leq C_0\lambda_{\sigma}^{-q},\quad \sigma \in \Gamma,
\end{equation*}
which, together with Lemma~\ref{lem-wz}, yields
\begin{equation}\label{eq-4-10}
  \sum_{\sigma \in \Gamma}\|\overline{F(\sigma)}Z_{\sigma}\|_p^2
      =  \sum_{\sigma \in \Gamma}|F(\sigma)|^2 \lambda_{\sigma}^{2p}
     \leq C_0^2\sum_{\sigma \in \Gamma}\lambda_{\sigma}^{-2(q-p)}
     < \infty.
\end{equation}
On the other hand, $\sum_{\sigma \in \Gamma}\overline{F(\sigma)}Z_{\sigma}$ is an orthogonal series with respect to $\|\cdot\|_p$.
This together with (\ref{eq-4-10}) implies that it converges in $\|\cdot\|_p$.
Thus, by the arbitrariness of the choice of $p\geq 0$, it converges actually in $\mathcal{S}(M)$.

Now we write $\eta = \sum_{\sigma \in \Gamma}\overline{F(\sigma)}Z_{\sigma}$. A simple calculation gives that
$F(\sigma) = \langle \eta, Z_{\sigma} \rangle$ for $\sigma \in \Gamma$, which together with (\ref{eq-4-8}) leads to $\widehat{\Phi_{\eta}}=F$.
Clearly, such an $\eta \in \mathcal{S}(M)$ is unique.
\end{proof}

\section{Applications }\label{sec-5}

In the last section, we show some applications of our results obtained in previous sections.

Let $M=(M_n)_{n\in \mathbb{N}}$ be the same as in Section~\ref{sec-3}. We continue to use the notions and notation made in previous sections.
Recall that elements of $\mathcal{S}^*(M)$ are called generalized functionals of $M$.
The theorem below actually gives norm estimates to generalized functionals of $M$.

\begin{theorem}\label{thr-5-1}
Let $\Phi$ be a generalized functional of $M$ and $C\geq 0$, $p\geq 0$ two constants such that
\begin{equation}\label{eq-5-1}
  |\widehat{\Phi}(\sigma)| \leq C\lambda_{\sigma}^p,\quad \sigma \in \Gamma.
\end{equation}
Then for $q> p+\frac{1}{2}$ one has
\begin{equation}\label{eq-5-2}
  \|\Phi\|_{-q} \leq C\bigg[\sum_{\sigma \in \Gamma}\lambda_{\sigma}^{-2(q-p)}\bigg]^{\frac{1}{2}},
\end{equation}
in particular $\Phi \in \mathcal{S}_q^*(M)$.
\end{theorem}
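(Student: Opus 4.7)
The strategy is to show that $\Phi$, originally defined on $\mathcal{S}(M)$, is in fact $\|\cdot\|_q$-continuous with the stated operator-norm bound; this immediately gives both $\Phi \in \mathcal{S}_q^*(M)$ and the desired estimate of $\|\Phi\|_{-q}$. The key point is that the computation has essentially already been carried out inside the proof of Theorem~\ref{thr-4-3}; I will just reorganize it as a norm estimate rather than an existence proof.

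First I would fix an arbitrary $\xi \in \mathcal{S}(M)$ and use Theorem~\ref{thr-3-5} together with the continuity of $\Phi$ on $\mathcal{S}(M)$ to write
\begin{equation*}
  \Phi(\xi) \;=\; \langle\!\langle \Phi, \xi\rangle\!\rangle \;=\; \sum_{\sigma\in\Gamma} \langle Z_\sigma,\xi\rangle\, \widehat{\Phi}(\sigma).
\end{equation*}
Next I would split each term as $\langle Z_\sigma,\xi\rangle\,\widehat{\Phi}(\sigma) = \bigl(\lambda_\sigma^q \langle Z_\sigma,\xi\rangle\bigr)\bigl(\lambda_\sigma^{-q}\widehat{\Phi}(\sigma)\bigr)$, invoke the identity $\lambda_\sigma^q \langle Z_\sigma,\xi\rangle = \langle \lambda_\sigma^{-q}Z_\sigma, \xi\rangle_q$ that was already used in the proof of Lemma~\ref{lem-3-2} and Theorem~\ref{thr-4-3}, and apply the Cauchy--Schwarz inequality to the resulting sum.

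This produces
\begin{equation*}
  |\Phi(\xi)| \;\leq\; \Bigl[\sum_{\sigma\in\Gamma} |\langle \lambda_\sigma^{-q}Z_\sigma,\xi\rangle_q|^2\Bigr]^{1/2}\, \Bigl[\sum_{\sigma\in\Gamma}\lambda_\sigma^{-2q}|\widehat{\Phi}(\sigma)|^2\Bigr]^{1/2},
\end{equation*}
where the first bracket equals $\|\xi\|_q^2$ because $\{\lambda_\sigma^{-q}Z_\sigma\}_{\sigma\in\Gamma}$ is an orthonormal basis of $\mathcal{S}_q(M)$ by Lemma~\ref{lem-3-2}. Plugging in the hypothesis $|\widehat{\Phi}(\sigma)|\leq C\lambda_\sigma^p$ into the second bracket gives a factor of $C\bigl[\sum_\sigma \lambda_\sigma^{-2(q-p)}\bigr]^{1/2}$, and this sum is finite by Lemma~\ref{lem-wz} since $2(q-p)>1$.

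Thus $|\Phi(\xi)| \leq C\bigl[\sum_\sigma \lambda_\sigma^{-2(q-p)}\bigr]^{1/2}\|\xi\|_q$ for every $\xi\in\mathcal{S}(M)$. Since $\mathcal{S}(M)$ is dense in $\mathcal{S}_q(M)$, $\Phi$ extends (uniquely) to a continuous linear functional on $\mathcal{S}_q(M)$ with the same bound; this places $\Phi$ in $\mathcal{S}_q^*(M)$ and yields the asserted estimate for $\|\Phi\|_{-q}$. I do not foresee a real obstacle here: the only point that requires care is the justification of interchanging $\Phi$ with the infinite sum in the first step, which is supplied by Theorem~\ref{thr-3-5} (convergence of the basis expansion in the topology of $\mathcal{S}(M)$) and the fact that $\Phi$ is continuous on $\mathcal{S}(M)$.
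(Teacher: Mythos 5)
Your proposal is correct and follows essentially the same route as the paper's proof: expand $\Phi(\xi)$ via Theorem~\ref{thr-3-5} and continuity, use the identity $\lambda_\sigma^q\langle Z_\sigma,\xi\rangle=\langle\lambda_\sigma^{-q}Z_\sigma,\xi\rangle_q$, apply Cauchy--Schwarz, and invoke Lemma~\ref{lem-wz} for finiteness. The only cosmetic difference is that you apply Cauchy--Schwarz before inserting the hypothesis $|\widehat{\Phi}(\sigma)|\leq C\lambda_\sigma^p$ while the paper inserts it first, and you make explicit the density argument extending the bound from $\mathcal{S}(M)$ to $\mathcal{S}_q(M)$, which the paper leaves implicit.
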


\begin{proof}
Let $\xi \in \mathcal{S}(M)$. Then, by Theorem~\ref{thr-3-5} and the continuity of $\Phi$, we have
\begin{equation*}
\Phi(\xi)
    = \sum_{\sigma \in \Gamma} \langle Z_{\sigma}, \xi\rangle \Phi(Z_{\sigma})
    = \sum_{\sigma \in \Gamma} \langle Z_{\sigma}, \xi\rangle \widehat{\Phi}(\sigma).
\end{equation*}
On the other hand, by using (\ref{eq-5-1}) and (\ref{eq-3-14}), we get
\begin{equation*}
  |\langle Z_{\sigma}, \xi\rangle \widehat{\Phi}(\sigma)|
    \leq C  \lambda_{\sigma}^p |\langle Z_{\sigma}, \xi\rangle|
   = C\lambda_{\sigma}^{-(q-p)}|\lambda_{\sigma}^q\langle Z_{\sigma}, \xi\rangle|
   = C\lambda_{\sigma}^{-(q-p)}|\langle \lambda_{\sigma}^{-q} Z_{\sigma}, \xi\rangle_{q}|,
\end{equation*}
where $\sigma$ runs over $\Gamma$. Thus
\begin{equation*}
\begin{split}
  |\Phi(\xi)|
     &\leq \sum_{\sigma \in \Gamma} |\langle Z_{\sigma}, \xi\rangle \widehat{\Phi}(\sigma)|\\
     &\leq C\sum_{\sigma \in \Gamma} \lambda_{\sigma}^{-(q-p)}|\langle \lambda_{\sigma}^{-q} Z_{\sigma}, \xi\rangle_{q}|\\
     & \leq C\bigg[\sum_{\sigma \in \Gamma} \lambda_{\sigma}^{-2(q-p)}\bigg]^{\frac{1}{2}}
            \bigg[\sum_{\sigma \in \Gamma} |\langle \lambda_{\sigma}^{-q} Z_{\sigma}, \xi\rangle_{q}|^2\bigg]^{\frac{1}{2}}\\
     &= C\bigg[\sum_{\sigma \in \Gamma} \lambda_{\sigma}^{-2(q-p)}\bigg]^{\frac{1}{2}} \|\xi\|_q,
\end{split}
\end{equation*}
which implies (\ref{eq-5-2}).
\end{proof}

\begin{example}\label{example-5-1}
Consider the counting measure $\#(\cdot)$ over $\mathbb{N}$. It can be shown that, as a function on $\Gamma$,  $\#(\cdot)$  satisfies
\begin{equation*}
  |\#(\sigma)| \leq \lambda_{\sigma},\quad \sigma \in \Gamma.
\end{equation*}
Thus, by Corollary~\ref{corol-4-1} and Theorem~\ref{thr-5-1}, $\#(\cdot)$ is the Fock transform of a certain generalized functional $\Phi_{\#}$ of $M$, and moreover
$\Phi_{\#}$ has a norm estimate like
\begin{equation}\label{ }
\|\Phi_{\#}\|_{-q} \leq  \bigg[\sum_{\sigma \in \Gamma}\lambda_{\sigma}^{-2(q-1)}\bigg]^{\frac{1}{2}}
 \leq \exp \bigg[\frac{1}{2}\sum_{k=1}^{\infty}k^{-2(q-1)}\bigg],
\end{equation}
where $q> \frac{3}{2}$.
\end{example}

\begin{example}\label{example-5-2}
Consider the function $F(\sigma) = \sqrt{\lambda_{\sigma}}$ on $\Gamma$. Clearly, it satisfies condition~(\ref{eq-4-7})
with $C=1$ and $p=\frac{1}{2}$.
Thus, there exists a generalized functional of $M$, written as $\Phi_{\sqrt{\lambda}}$, such that
\begin{equation*}
\widehat{\Phi_{\sqrt{\lambda}}}(\sigma) = \sqrt{\lambda_{\sigma}},\quad \sigma \in \Gamma.
\end{equation*}
Moreover,  by Theorem~\ref{thr-5-1}, $\Phi_{\sqrt{\lambda}}$ has a norm estimate as below
\begin{equation}\label{ }
\|\Phi_{\sqrt{\lambda}}\|_{-q} \leq  \bigg[\sum_{\sigma \in \Gamma}\lambda_{\sigma}^{1-2q}\bigg]^{\frac{1}{2}}
 \leq \exp \bigg[\frac{1}{2}\sum_{k=1}^{\infty}k^{1-2q}\bigg],
\end{equation}
where $q> 1$.
\end{example}

\begin{theorem}\label{thr-5-2}
Let $\Phi \in \mathcal{S}_p^*(M)$, where $p\geq 0$.  Then the norm of $\Phi$ in $\mathcal{S}_p^*(M)$ satisfies
\begin{equation}\label{ }
  \|\Phi\|_{-p}^2 = \sum_{\sigma \in \Gamma} \lambda_{\sigma}^{-2p} |\widehat{\Phi}(\sigma)|^2.
\end{equation}
\end{theorem}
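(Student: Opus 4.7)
The plan is to exploit the Hilbert-space structure of $\mathcal{S}_p(M)$ combined with Riesz representation and Parseval's identity. Since $\mathcal{S}_p(M)$ is a Hilbert space, so is its dual $\mathcal{S}_p^*(M)$, and by the Riesz representation theorem there is a unique $\eta \in \mathcal{S}_p(M)$ with
\[
\Phi(\xi) = \langle \eta, \xi\rangle_p, \quad \xi \in \mathcal{S}_p(M), \qquad \text{and} \qquad \|\eta\|_p = \|\Phi\|_{-p}.
\]
Then the proof reduces to computing $\|\eta\|_p^2$ via the orthonormal basis $\{\lambda_\sigma^{-p} Z_\sigma \mid \sigma \in \Gamma\}$ provided by Lemma~\ref{lem-3-2}.

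The key step is to identify the Fourier coefficients of $\eta$ with respect to this basis in terms of $\widehat{\Phi}$. First I would observe that $Z_\sigma \in \mathcal{S}(M) \subset \mathcal{S}_p(M)$ for every $\sigma$, so $\widehat{\Phi}(\sigma) = \langle\!\langle \Phi, Z_\sigma\rangle\!\rangle = \Phi(Z_\sigma) = \langle \eta, Z_\sigma\rangle_p$. Combining this with the identity $\langle \lambda_\sigma^{-p} Z_\sigma, \eta\rangle_p = \lambda_\sigma^p \langle Z_\sigma, \eta\rangle$ from (\ref{eq-3-7}) (or directly from the definition of $\langle\cdot,\cdot\rangle_p$), one sees that
\[
\langle \lambda_\sigma^{-p} Z_\sigma, \eta\rangle_p = \lambda_\sigma^{-p} \langle Z_\sigma, \eta\rangle_p = \lambda_\sigma^{-p}\,\overline{\widehat{\Phi}(\sigma)},
\]
so $|\langle \lambda_\sigma^{-p} Z_\sigma, \eta\rangle_p|^2 = \lambda_\sigma^{-2p} |\widehat{\Phi}(\sigma)|^2$.

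The final step is to apply Parseval's identity for the orthonormal basis $\{\lambda_\sigma^{-p} Z_\sigma\}_{\sigma \in \Gamma}$ of $\mathcal{S}_p(M)$, yielding
\[
\|\Phi\|_{-p}^2 = \|\eta\|_p^2 = \sum_{\sigma \in \Gamma} |\langle \lambda_\sigma^{-p} Z_\sigma, \eta\rangle_p|^2 = \sum_{\sigma \in \Gamma} \lambda_\sigma^{-2p} |\widehat{\Phi}(\sigma)|^2.
\]
I do not anticipate a substantive obstacle; the main care needed is in bookkeeping the conjugate-linear convention and confirming that the identity (\ref{eq-3-7}) in fact gives the claimed coefficient, which is a one-line check using $\|Z_\sigma\|_p^2 = \lambda_\sigma^{2p}$.
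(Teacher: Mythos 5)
Your proposal is correct and follows essentially the same route as the paper's own proof: both invoke the Riesz representation theorem to obtain $\eta \in \mathcal{S}_p(M)$ with $\Phi = \langle \eta, \cdot\rangle_p$ and $\|\eta\|_p = \|\Phi\|_{-p}$, then apply Parseval's identity with the orthonormal basis $\{\lambda_\sigma^{-p} Z_\sigma \mid \sigma \in \Gamma\}$ from Lemma~\ref{lem-3-2}. Your explicit bookkeeping of the conjugation, showing $\langle \lambda_\sigma^{-p} Z_\sigma, \eta\rangle_p = \lambda_\sigma^{-p}\,\overline{\widehat{\Phi}(\sigma)}$, merely spells out a step the paper leaves implicit.
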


\begin{proof}
By the Riesz representation theorem \cite{{muscat}}, there exists a unique $\eta \in \mathcal{S}_p(M)$ such that
$\|\eta\|_p = \|\Phi\|_{-p}$
and
\begin{equation}\label{ }
  \Phi(\xi) = \langle\eta,\xi\rangle_{p},\quad  \xi \in \mathcal{S}_p(M),
\end{equation}
which, together with Lemma~\ref{lem-3-2}, gives
\begin{equation*}
\|\Phi\|_{-p}^2
     =\|\eta\|_p^2
     = \sum_{\sigma \in \Gamma} |\langle\lambda_{\sigma}^{-p}Z_{\sigma}, \eta \rangle_{p}|^2
     = \sum_{\sigma \in \Gamma} \lambda_{\sigma}^{-2p}|\widehat{\Phi}(\sigma)|^2.
\end{equation*}
This completes the proof.
\end{proof}

In general, the usual product of two generalized functionals of $M$ is no longer a generalized functional of $M$. This means that the usual product
is not a multiplication in $\mathcal{S}^*(M)$. The following two examples, however, show that by using our characterization theorems one can define
other types of multiplication in $\mathcal{S}^*(M)$.

\begin{example}\label{example-5-3}
Let $\Phi$, $\Psi$ be generalized functionals of $M$. Then, by Theorem~\ref{thr-4-2}, the function $F(\sigma) = \widehat{\Phi}(\sigma)\widehat{\Psi}(\sigma)$
satisfies condition~(\ref{eq-4-7}). Thus there exists a unique generalized functional of $M$, written as $\Phi\ast\Psi$,
such that
\begin{equation*}
  \widehat{\Phi\ast\Psi}(\sigma) = \widehat{\Phi}(\sigma) \widehat{\Psi}(\sigma),\quad \sigma \in \Gamma.
\end{equation*}
We call $\Phi\ast\Psi$ the convolution of $\Phi$ and $\Psi$. It can be shown that, with $\ast$ as the multiplication, $\mathcal{S}^*(M)$
becomes an algebra.
\end{example}

\begin{remark}
In \cite{han}, the authors defined the convolution for square integrable functionals of $M$.
Here our definition of convolution actually extends that in \cite{han}.
\end{remark}

\begin{example}\label{example-5-4}
Let $\Phi$, $\Psi$ be generalized functionals of $M$. Then there exists a unique generalized functional of $M$, written as $\Phi\diamond\Psi$,
such that
\begin{equation}\label{ }
 \widehat{\Phi\diamond\Psi}(\sigma) = \sum_{\tau \subset \sigma} \widehat{\Phi}(\tau)\widehat{\Psi}(\sigma\setminus\tau),\quad \sigma \in \Gamma,
\end{equation}
where $\sum_{\tau \subset \sigma}$ means that the sum is taken over all subsets of $\sigma$. We call $\Phi\diamond\Psi$ the Wick product of $\Phi$ and $\Psi$.
\end{example}

\begin{proof}
In fact, by Lemma~\ref{lem-3-4}, there exists $p\geq 0$ such that $\Phi$, $\Psi\in \mathcal{S}_p^*(M)$. Now define
\begin{equation*}
 F(\sigma) = \sum_{\tau \subset \sigma} \widehat{\Phi}(\tau)\widehat{\Psi}(\sigma\setminus\tau),\quad \sigma \in \Gamma
\end{equation*}
and take $q> p +\frac{1}{2}$. Then, by using Theorem~\ref{thr-5-2}, we have
\begin{equation*}
\begin{split}
\sum_{\sigma\in\Gamma}\lambda_{\sigma}^{-2q}|F(\sigma)|^2
&=\sum_{\sigma\in\Gamma}\lambda_{\sigma}^{-2(q-p)}\Big|\sum_{\tau\subset\sigma}\lambda_{\tau}^{-p}\widehat{\Phi}(\tau)
  \lambda_{\sigma\setminus\tau}^{-p}\widehat{\Psi}(\sigma\setminus \tau)\Big|^2\\
&\leq \sum_{\sigma\in\Gamma}\lambda_{\sigma}^{-2(q-p)}\sum_{\tau\subset\sigma}\lambda_{\tau}^{-2p}|\widehat{\Phi}(\tau)|^2
      \sum_{\tau\subset\sigma}\lambda_{\tau}^{-2p}|\widehat{\Psi} (\tau)|^{2}\\
&\leq \sum_{\sigma\in\Gamma}\lambda_{\sigma}^{-2(q-p)}\sum_{\tau\in \Gamma}\lambda_{\tau}^{-2p}|\widehat{\Phi}(\tau)|^2
      \sum_{\tau\in \Gamma}\lambda_{\tau}^{-2p}|\widehat{\Psi} (\tau)|^{2}\\
&=\|\Phi\|_{-p}^2\|\Psi\|_{-p}^2\sum_{\sigma\in\Gamma}\lambda_{\sigma}^{-2(q-p)},
\end{split}
\end{equation*}
which implies that $|F(\sigma)| \leq C\lambda_{\sigma}^q$, $\sigma\in \Gamma$,
where
\begin{equation*}
   C=\|\Phi\|_{-p} \|\Psi\|_{-p} \bigg[\sum_{\sigma\in\Gamma}\lambda_{\sigma}^{-2(q-p)}\bigg]^{\frac{1}{2}}<\infty.
\end{equation*}
Thus, by Corollary~\ref{corol-4-1}, there exists a unique generalized functional of $M$, written as $\Phi\diamond\Psi$, satisfying
$\widehat{\Phi\diamond\Psi} =F$.
\end{proof}

\begin{remark}
In \cite{wang-z}, by using the Guichardet representation, the authors defined the Wick product for generalized functionals of Bernoulli noise.
\end{remark}

\section*{Acknowledgement}

This work is supported by National Natural Science Foundation of China (Grant No. 11461061).

\end{document}